\newcommand{\bgq}{\boldsymbol{\theta}}
\newcommand{\gf}{\varphi}
\newcommand{\setm}[2]{\{#1:#2\}}
\newcommand{\setbm}[2]{\bigl\{#1:#2\bigr\}}
\newcommand{\set}[1]{\{#1\}}
\newcommand{\tbf}{\textbf}
\newcommand \alg[1]{{\mathcal #1}}
\newcommand \var[1]{\pmb{\mathcal #1}}
\newcommand \brick{{E}}
\newcommand \hideit[1]{}
\newcommand \minta {{D}}
\newcommand \mintc {{C}}
\newtheorem*{theoremnonum}{Theorem}
\def\sajat#1{}
\begin{document}
\title[Tolerances as images of congruences]{Tolerances as images of congruences in  varieties defined by linear identities}

\author[I.\ Chajda]{Ivan Chajda}
\email{ivan.chajda@upol.cz}
\address{Palack\'y University Olomouc\\Department of Algebra and Geometry\\17. listopadu 12,
771 46 Olomouc, Czech Republic}

\author[G.\ Cz\'edli]{G\'abor Cz\'edli}
\email{czedli@math.u-szeged.hu}
\urladdr{http://www.math.u-szeged.hu/~czedli/}
\address{University of Szeged\\ Bolyai Institute\\Szeged,
Aradi v\'ertan\'uk tere 1\\ Hungary 6720}

\author[R.\ Hala\v s]{Radom\'\i r Hala\v s}
\email{radomir.halas@upol.cz} 
\address{Palack\'y University Olomouc\\Department of Algebra and Geometry\\17. listopadu 12,
771 46 Olomouc, Czech Republic}

\author[P.\ Lipparini]{Paolo Lipparini}
\email{lipparin@mat.uniroma2.it}
\address{Department of Mathematics\\ Tor Vergata University of Rome\\ I-00133 Rome, Italy}

\thanks{This research was supported the project Algebraic Methods in Quantum Logic, No.:
CZ.1.07/2.3.00/20.0051 and by 
by the NFSR of Hungary (OTKA), grant numbers   K77432 and K83219}

\subjclass[2010]{Primary: 08A30. \sajat{Subalgebras, congruence relations} Secondary: 08B99\sajat{Varieties, none of the above}, 20M07\sajat{Varieties and pseudovarieties of semigroups}}

\keywords{Tolerance relation,  homomorphic image of a congruence, linear identity, balanced identity}

\date{June 5, 2012}

\begin{abstract} An identity $s=t$ is \emph{linear} if each variable occurs at most once in each of the terms $s$ and $t$. Let $T$ be a tolerance relation of an algebra $\alg A$ in a variety defined by a set of linear identities. We prove that there exist an algebra $\alg B$ in the same variety and a congruence $\bgq$ of $\alg B$ such that a homomorphism from $\alg B$ onto $\alg A$ maps $\bgq$ onto $T$.
\end{abstract}

%%*********************************************************************************************
%\red{Dear Referee,}
%
%\red{Thank you for the report. Your suggestion has been carried out, with a slight change. We would like to present the result as early in the paper as possible, as early as the necessary concepts are introduced. Hence the suggested sentences are in the proof rather than before the Theorem.  All the changes are in red.}
%\red{\hfill Yours sincerely: G\'abor Cz\'edli}
%
%
%
%%******************************************************************************************

Mailbox
\maketitle

An identity $s=t$ is \emph{linear} if each variable occurs at most once in each of the terms $s$ and $t$, see, for example,  M.\,N.\ Bleicher,  H.~Schneider and R.\,L.~Wilson~\cite[Theorem 4.19]{bleicher}, W.~Taylor~\cite{taylor},  I.~Bo\v snjak and R.~Madar\'asz~\cite{bosnjakmadarasz}, A.~Pilitowska~\cite{pilitowska}, and their references. 
In the particular case where every
variable occurs exactly twice,
once in $s$ and once in $t$, 
we speak of a \emph{balanced linear} identity,  see M.\,V.\ Lawson~\cite{lawson}. For example, the variety of semigroups and that of commutative semigroups are defined by balanced linear identities.   
Binary reflexive, symmetric, and compatible relations are called \emph{tolerances}; see I.~Chajda~\cite{chajdabook}.
%for a monograph devoted to them.
If $\gf\colon\alg B\to \alg A$ is a  surjective homomorphism and $\bgq$ is a congruence of the algebra $\alg B$, then $\gf(\bgq)=\set{(\gf(x),\gf(y)): (x,y)\in\bgq}$ is a  tolerance of $\alg A$. Each tolerance of $\alg A$ is obtained this way; this follows from our result below (applied for the variety defined by the empty set of linear identities). Sometimes, like in I.\ Chajda, G.\ Cz\'edli, and R.\ Hala\v s~\cite{chajdaczghalas} or  
G.\ Cz\'edli and G.~Gr\"atzer~\cite{czggg}, we can choose an appropriate $\alg B$ from a given variety. We have the following additional result of this kind.

\begin{theoremnonum}\label{aieosS}
Assume that $\var V$ is a variety defined by a set of linear identities, that $\alg A=(A,F)\in\var V$, and that $T$ is a tolerance of $\alg A$.
Then there exist an algebra $\alg B\in\var V$, a congruence $\bgq$ of $\alg B$, and a surjective homomorphism $\gf\colon\alg B\to \alg A$ such that 
$T=\gf(\bgq)$.
\end{theoremnonum}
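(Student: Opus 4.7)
My plan is to take $\alg B := T$, viewed as a subalgebra of $\alg A \times \alg A$. Compatibility of $T$ makes this a subalgebra, and since $\var V$ is closed under products and subalgebras, $\alg B \in \var V$. Let $\gf := \pi_1 \colon \alg B \to \alg A$ be the first-coordinate projection, which is a surjective homomorphism by reflexivity of $T$ (every $a \in A$ is $\gf(a,a)$ with $(a,a) \in B$). For the congruence, I would try $\bgq :=$ the congruence of $\alg B$ generated by the ``swap'' set $R := \{((a,b),(b,a)) : (a,b) \in T\}$, which is the graph of the involutive automorphism $\sigma \colon (a,b) \mapsto (b,a)$ of $\alg B$ (an automorphism because the operations are coordinatewise and $T$ is symmetric). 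Clearly $\gf(R) = T$, giving $T \subseteq \gf(\bgq)$ immediately.

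The reverse inclusion $\gf(\bgq) \subseteq T$ is the heart of the matter, and this is precisely where the linearity hypothesis should enter. A natural target is the stronger claim: for every $\bgq$-pair $((u_1, u_2), (v_1, v_2))$, both $(u_1, v_1)$ and $(u_2, v_2)$ lie in $T$. The relation described by this condition is reflexive, symmetric, and compatible (the latter from compatibility of $T$), but it need not be transitive since $T$ itself is not in general; so it is only a tolerance on $\alg B$, not a priori containing $\bgq$. Linearity of the defining identities of $\var V$ is what rules out the leakage that would otherwise occur under the transitive and compatibility closure of $R$: because linear terms do not duplicate variables, derivations inside the congruence generated by $R$ remain controlled enough that the first-coordinate projection of $\bgq$ stays inside $T$. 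Concretely, I would try to show by induction on the length of a congruence-closure derivation that any derived pair $((u_1,u_2),(v_1,v_2))$ arises as $p^{\alg B}(r_1, \dots, r_n)$ versus $p^{\alg B}(\sigma(r_1), \dots, \sigma(r_n))$ for a single term $p$ and $r_i \in B$, and that the linearity of the identities satisfied by $p$ prevents intermediate identifications from glueing together the ``swap orbits''.

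The main obstacle is therefore controlling the (transitive) closure of $R$ and showing that $\gf(\bgq)$ does not spread out to $T \circ T$ or even the transitive closure $T^*$, both strictly larger than $T$ in general. Giving a rigorous argument that exploits the linearity of the defining identities to rule this out, and carrying out the explicit term-manipulation bookkeeping needed to verify the ``single-term representation'' described above, is the technical crux of the proof.
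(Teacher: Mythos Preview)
Your construction of $\alg B=T\subseteq\alg A^2$, of $\gf=\pi_1$, and of $\bgq=\mathrm{Cg}^{\alg B}(R)$ with $R=\{((a,b),(b,a)):(a,b)\in T\}$ is set up correctly, and the easy inclusion $T\subseteq\gf(\bgq)$ is fine. The difficulty is exactly where you locate it, but your plan for the reverse inclusion does not go through as written, and the gap is not merely cosmetic. Your ``single-term representation'' claim says that every $\bgq$-pair has the form $\bigl(p(r_1,\dots,r_n),\,p(\sigma(r_1),\dots,\sigma(r_n))\bigr)$; but since $\sigma$ is an automorphism this equals $\bigl(u,\sigma(u)\bigr)$, so the claim would force $\bgq=R$, contradicting reflexivity of $\bgq$ at any $u\neq\sigma(u)$. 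A repaired version in which only \emph{some} of the $r_i$ are swapped describes a reflexive, symmetric, compatible relation containing $R$, but you give no argument for its transitivity in $\alg B$ (which is not free), and this is precisely what is needed to conclude that it contains $\mathrm{Cg}(R)$. The relation $S=\{((u_1,u_2),(v_1,v_2))\in B^2:(u_1,v_1)\in T\}$ you would like $\bgq$ to sit inside is only a tolerance on $\alg B$, not a congruence, so the usual ``smallest congruence containing $R$'' argument does not apply. Finally, and most tellingly, nothing in your argument uses linearity: the construction $\alg B=T\subseteq\alg A^2$ lies in \emph{every} variety containing $\alg A$, so if your proof were completable as sketched it would establish the theorem for arbitrary varieties and render the hypothesis superfluous. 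You assert that linearity ``rules out the leakage'' but never say how; linearity is a property of the \emph{defining} identities of $\var V$, and it is not clear what leverage that gives over congruence generation inside an arbitrary member of $\var V$.

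The paper's argument is quite different and sidesteps the whole issue of controlling a generated congruence. It passes to the complex algebra $\alg C$ of $\alg A$ (nonempty subsets with the induced operations); linearity is used exactly once, to ensure $\alg C\in\var V$, via the classical fact that linear identities lift to complexes. One then takes $\alg E=\{X\subseteq A:X\neq\varnothing,\ X^2\subseteq T\}\leq\alg C$, sets $\alg B=\{(x,Y)\in A\times E:x\in Y\}\leq\alg A\times\alg E$, and lets $\bgq$ be the \emph{kernel of the second projection} and $\gf$ the first projection. Because $\bgq$ is a kernel rather than a generated congruence, the verification $\gf(\bgq)=T$ is a two-line computation: a $\bgq$-pair has the same second coordinate $Y\in E$, so the two first coordinates lie in $Y$ and hence are $T$-related; conversely $(x_1,x_2)\in T$ gives $Y=\{x_1,x_2\}\in E$ and $(x_i,Y)\in B$. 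The moral is that the right place to spend the linearity hypothesis is in enlarging the ambient algebra (to complexes), not in trying to tame congruence generation inside $T$.
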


\begin{proof} We generalize the idea of  G.~Cz\'edli and G.~Gr\"atzer~\cite{czggg}.

If $\alg \minta $ is an arbitrary algebra (not necessarily in $\var V$), then the \emph{complex algebra} $\alg \mintc$  of $\alg \minta $, in other words the \emph{algebra of complexes of $\alg \minta$}, has the underlying
set $\set{X\subseteq   \minta : X \neq \varnothing}$, and for each basic operation $f$ of $\alg \minta $, the corresponding
operation of $\alg \mintc$ is defined by 
\[
f(X_1,\ldots,X_n)=\setm{f(x_1,\ldots,x_n)} {x_1\in X_1,\ldots, x_n\in X_n}\text.
\]
If $s$ is a linear term, which means that each variable occurs in $s$ at most once, then it can be shown that
\[
s(X_1, \dots ,X_{n}) = \set{s(x_1,\dots, x_n) : x_1 \in X_1,\dots, x_n \in X_n}
\]
holds for arbitrary $X_i\in \mintc$ (but this does not hold for arbitrary terms in general).
This implies, as proved in 
\hideit{ M.\,N.\ Bleicher,  H.~Schneider and R.\,L.~Wilson~}\cite
{bleicher} and \hideit{W.~Taylor~}\cite{taylor},
that if a variety is defined by linear identities,
then it contains the complex algebra of each of its members.

Next, let $\brick$ denote the set 
$\set{X\subseteq A:  X^2\subseteq T \text{ and } X\neq \varnothing}$. Since it is clearly a subalgebra of the complex algebra of $\alg A$, the paragraph above implies that $\alg \brick=(\brick,F)$ belongs to $\var V$. 
Let $B=\setm{(x,Y)\in A\times \brick }{x\in Y}$. Then $\alg B=(B,F)$ also belongs to $\var V$ since it is a subalgebra of $\alg A\times \alg \brick$. Define $\bgq=\setbm{\bigl((x_1,Y_1),(x_2,Y_2)\bigr) \in {B}^2}{Y_1=Y_2  }$. As 
the kernel of the second projection from $\alg B$ to $\alg \brick$, 
it is a congruence of {$\alg B$}. The first projection $\gf \colon \alg B\to \alg A$, $(x,Y)\mapsto x$, is a surjective homomorphism since, for every $x\in A$,
$x=\gf\bigl(x,\set x \bigr)$.

Clearly, if $\bigl((x_1,Y_1),(x_2,Y_2)\bigr)\in\bgq$, then 
$\set{x_1,x_2}\subseteq Y_1=Y_2\in\brick $
implies that $\bigl(\gf(x_1,Y_1),\gf(x_2,Y_2)\bigr)= (x_1,x_2)\in T$. Hence $\gf(\bgq)\subseteq T$. 
Conversely, let $(x_1,x_2)\in T$. Then, 
with $Y=\set{x_1,x_2}$, we have that  $(x_1,Y), (x_2,Y)\in B$,  $\bigl((x_1,Y), (x_2,Y)\bigr)\in\bgq$, and
$x_i= \gf(x_i,Y)$. This implies that  
$(x_1,x_2)\in\gf(\bgq)$, and we conclude that $T\subseteq \gf(\bgq)$.
\end{proof}

\end{document}